\theoremstyle{plain}
    \newtheorem{thm}{Theorem}[section]
       \newtheorem{lem}{Lemma}[section]
       \newtheorem{defn}{Definition}[section]
\numberwithin{equation}{section}
\begin{document}
\title{Exponential attractors  for a nonlocal delayed reaction-diffusion equation on an unbounded domain}

\author{Wenjie Hu$^{1,2}$,  Tom\'{a}s
Caraballo$^{3,4}$\footnote{Corresponding author.  E-mail address: caraball@us.es (Tom\'{a}s
Caraballo).}
\\
\small  1. The MOE-LCSM, School of Mathematics and Statistics,  Hunan Normal University,\\
\small Changsha, Hunan 410081, China\\
\small  2. Journal House, Hunan Normal University, Changsha, Hunan 410081, China\\
\small $^3$Dpto. Ecuaciones Diferenciales y An\'{a}lisis Num\'{e}rico, Facultad de Matem\'{a}ticas,\\
\small  Universidad de Sevilla, c/ Tarfia s/n, 41012-Sevilla, Spain\\
\small $^4$Department of Mathematics, Wenzhou University, \\
\small Wenzhou, Zhejiang Province 325035, China
}

\date {}
\maketitle

\begin{abstract}
The main objective of this paper is to investigate exponential attractors for a nonlocal delayed reaction-diffusion equation on an unbounded domain.  We first obtain the existence of a globally attractive absorbing set for the dynamical system generated by the equation under the assumption that the nonlinear term is bounded. Then,  we construct exponential attractors of the equation directly in its natural phase space, i.e., a Banach space with explicit fractal dimension by combining  squeezing properties of the system as well as a covering lemma of finite subspace of Banach spaces. Our result generalizes the methods established in Hilbert spaces and weighted spaces, and the fractal dimension of the obtained exponential attractor does not depend on the entropy number but only depends on some inner characteristic of the studied equation.
\end{abstract}

\bigskip

{\bf Key words:} {\em Exponential attractors, Banach space,   fractal dimension, nonlocal, delay, reaction-diffusion equations, unbounded domain}

\section{Introduction}
Consider the following nonlocal delayed reaction-diffusion equation on $\mathbb{R}^N$
\begin{equation}\label{1}
\left\{\begin{array}{l}\frac{\partial u}{\partial t}(x, t)= \Delta u(x, t)-\mu u(x, t)+\sigma u(x,t-\tau) +\varepsilon\int_{\mathbb{R}^N}\Gamma_{\iota}(x-y)b(u(y,t-\tau))\mathrm{d}y+g(x), t>0,\\ u_0(x,s)=\phi(x, s),-\tau \leq s \leq 0, x \in \mathbb{R}^N,\end{array}\right.
\end{equation}
where, $N$ is a positive integer, $u(\cdot,t)\in \mathbb{X}\triangleq L^2(\mathbb{R}^N)$ is the uniform bounded continuous  function on $\mathbb{R}^N$ with   norm $\|\cdot\|$, for any $t\geq0$. $\Delta$ is the Laplacian operator on $\mathbb{R}^N$, $\mu, \sigma, \varepsilon$ and $\tau$ are positive constants, $g\in \mathbb{X}$.  The kernel $\Gamma_\iota(x)$  parameterized by a positive constant $\iota$ is given by $\displaystyle \Gamma_{\iota}(x)=\frac{1}{{(4\pi \iota)}^{N/2}}e^{-\frac{|x|^2}{4\iota}}$. $u_t\in \mathcal{C}$ is defined by  $u_t(x,\theta)=u(x, t+\theta),-\tau \leq \theta \leq 0$, where $\mathcal{C}$ is the space of continuous functions from $[-\tau, 0]$ to $\mathbb{X}$ equipped with the supremum norm $\|\phi\|_{\mathcal{C}}=\sup_{\theta \in[-\tau, 0]}\|\phi(\theta)\|$ for any $\phi \in \mathcal{C}$.  The initial data $\phi\in \mathcal{C}$ and $b: \mathcal{C}\rightarrow \mathbb{X}$ is a nonlinear operator.  \eqref{1} arises largely from biological, physical and chemical processes. For instance, it can be used to describe the growth of mature population  of a two-stage species (juvenile and adult, with a fixed maturation time $\tau$) whose mature individuals and immature ones both diffuse, such as fish. So et al. \cite{SWZ} first derived \eqref{1}  in the case $\sigma=0, N=1, g\equiv 0$ from the biological point of view.

Most existing works on this field are only concerned with the existence and  qualitative properties of traveling wave  solutions, which may explain the invasion of species. See, for instance, \cite{FT,SWZ} and the references therein. Nevertheless, little attention has been paid to the global dynamics of \eqref{1} due to the non-compactness of the spatial domain and the nonlocal integral,  causing many nice results in dynamical system theory ineffective. Recently, Yi, Chen and Wu \cite{YCWT} overcame this difficulty by innovatively  introducing the compact open topology and showed the existence and global attractivity of a positive steady state by delicately constructing a priori estimate for nontrivial solutions under the compact open topology in the case $\sigma=0,g\equiv 0$, and assuming the nonlinear term $b$ is globally bounded and admits a unique two periodic fixed point. One naturally wonders what can we say about the dynamics of \eqref{1} if we do not impose the conditions on $b$ used in \cite{YCWT} and work in the  natural phase space $\mathcal{C}$ under the usual supremum norm? As \eqref{1} generates an infinite dimensional system, the first step should be to investigate existence of global attractors which can reduce the essential dynamics to a compact set. Furthermore, if the attractors posses finite dimension, then the limit dynamics can be described by a finite number of parameters. These have been done in \cite{WK} and our recent work \cite{HCEN}.

Nevertheless, as pointed out in \cite{C18} and \cite{C21}, the global attractor has several drawbacks. It may have slow  convergence rate, may be sensitive to perturbations and may fail to capture important transient behaviors. Hence, Eden, Foias, Nicolaenko and Temam proposed in \cite{EFNT94}  the concept of exponential attractor, which is a positive  invariant compact subset with finite fractal dimension and attracts all bounded subsets at an exponential rate. It is well known that if exponential attractors  exist, then they contain global attractors and hence  play significant roles in investigating asymptotic behavior of infinite dimensional nonlinear dynamical systems, especially for those with fast convergence rate. The method has been extended to study the existence of exponential attractors of functional differential equations in Banach spaces in our recent work in \cite{HCE}.

The above mentioned works are all concerned with delayed partial differential equations on bounded domains. On the other hand, when the domain is unbounded, several problems arise: the Laplace operator has a continuous spectrum and the solution semiflow does not have compact absorbing sets.  Hence Babin and Nicolaenko \cite{BN95} and Efendiev and Miranville \cite{EM99} investigated exponential attractors in weighted Hilbert spaces requiring the initial data and forcing term also belong to the corresponding spaces. Therefore, one natural question arises: how to construct an exponential attractor of \eqref{1} directly in the Banach space $\mathcal{C}$?  Efendiev, Miranville and  Zelik \cite{C17,C18} adopted the so-called smoothing property of the semigroup between two different Banach spaces to construct existence of  exponential and uniform attractors for systems in Banach spaces, which was generalized in \cite{C17,C18,C21,C20,LHSS20,ZZW}.  The fractal dimensions of the exponential attractors they established depend on the entropy numbers between two spaces which is in general quite difficult to obtain and may vary from space to space. In our recent work \cite{HCE}, we provide an alternative method for constructing exponential attractors for PFDEs on bounded domains with fractal dimensions that only depend on  some inner characteristics of the equation, such as  the spectrum of the linear part of the equation on a bounded domain and the Lipschitz constant of the nonlinear part. Here, we go one step further  and consider the equations on an unbounded domain. The unboundedness of the domain causes the Laplace operator has a continuous spectrum, $H^1(\mathbb{R}^n)$ is not compactly embedded in $L^2(\mathbb{R}^n)$ and the solution semiflow does not have compact absorbing sets  in the original topology, causing the method in \cite{HCE} ineffective. Hence, we propose a different method by decomposing the solution of \eqref{1} into a sum of three parts, among which, each part fulfils  the squeezing property.

The outline of our paper is as follows. In Section 2, we prove the existence of a globally attractive absorbing set for the equation under the assumption that the nonlinear term is bounded and introduce the squeezing property established in our recent work \cite{HCEN}.  Then, we construct exponential attractors of the equation directly in its natural phase space, i.e., the Banach space $\mathcal{C}$ in Section 3.

\section{Preliminaries}
In this section, we will prove the existence of  a globally attractive absorbing set of the infinite dynamical system generated by \eqref{1}. By the  Fourier transformation, we can see that the semigroup  $S(t): \mathbb{X}\rightarrow \mathbb{X}, t\geq 0$ generated by $\Delta-\mu I$ is defined as
 \begin{equation}\label{2.1}
\left\{\begin{array}{l}
S(0)[\phi](x)=\phi(x), \\
S(t)[\phi](x)=\frac{e^{-\mu t}}{ (4 \pi t)^{N/2}} \int_{\mathbb{R}^N} \phi(y) e^{ -\frac{|x-y|^{2}}{4 t} } dy, t \in(0, \infty),
\end{array}\right.
 \end{equation}
for $(x, \phi) \in \mathbb{R}^N \times \mathbb{X}$, which is  analytic and strongly continuous on $\mathbb{X}$.

Define $H: \mathbb{X} \rightarrow \mathbb{X}$ by
$$
H(\phi)(x)=\int_{\mathbb{R}^N} \Gamma_\iota(x-y) \phi(y) \mathrm{d} y
$$
for all  $\phi \in X$. Then,  by the expression of $\Gamma_\iota(\cdot)$, we have $\|H\|\triangleq \sup\{\frac{\|H(\phi)\|}{\|\phi\|}: \|\phi\|\neq 0\}\leq 1$.

We introduce the following results concerning the properties of semigroup $\{S(t)\}_{t\geq 0}$, which is frequently used throughout the whole paper. The details of the proof can be found in \cite{YCWT} lemmas 2.1 and 2.4.
\begin{lem}\label{lem2.1}
Let $\{S(t)\}_{t\geq 0}$  be defined by \eqref{2.1}, then we have the following results.\\
(i) $\|S(t) \phi\|\leq e^{-\mu t} \|\phi\|$ for all $\phi \in \mathbb{X}$, $t \in \mathbb{R}_{+}$.\\
(ii) $\{S(t)\}_{t\geq 0}$ is an analytic and strongly continuous semigroup on $\mathbb{X}$.\\
(iii) For all $t \in(0, \infty)$ and $(x, \phi) \in (0, \infty)\times \mathbb{X},$ there holds
$$
\begin{array}{l}
 S(t)[a](x)=a e^{-\mu t}.
\end{array}
$$
\end{lem}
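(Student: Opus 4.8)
The plan is to reduce all three parts to elementary properties of the Gaussian heat kernel. Write $G_t(x)=(4\pi t)^{-N/2}e^{-|x|^2/(4t)}$ for $t>0$, so that by \eqref{2.1} one has $S(t)\phi=e^{-\mu t}\,(G_t*\phi)$ for $t>0$ and $S(0)=I$. The single fact I would use throughout is that $G_t\ge 0$ with $\|G_t\|_{L^1(\mathbb{R}^N)}=(4\pi t)^{-N/2}\int_{\mathbb{R}^N}e^{-|z|^2/(4t)}\,dz=1$, i.e.\ the usual normalization of the heat kernel; equivalently, on the Fourier side $S(t)$ acts as multiplication by the symbol $e^{-\mu t-t|\xi|^2}$.

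For part (i) I would apply Young's convolution inequality: $\|S(t)\phi\|=e^{-\mu t}\|G_t*\phi\|\le e^{-\mu t}\|G_t\|_{L^1}\|\phi\|=e^{-\mu t}\|\phi\|$ for $t>0$, the case $t=0$ being trivial. Equivalently, $|\widehat{S(t)\phi}(\xi)|=e^{-\mu t}e^{-t|\xi|^2}|\hat\phi(\xi)|\le e^{-\mu t}|\hat\phi(\xi)|$ pointwise, and Plancherel's identity gives the bound. For part (ii), the semigroup law $S(t)S(s)=S(t+s)$ is immediate from $G_t*G_s=G_{t+s}$ (or from multiplicativity of the Fourier symbol). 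For strong continuity I would use the Fourier representation and dominated convergence:
\[
\|S(t)\phi-\phi\|^2=\int_{\mathbb{R}^N}\bigl|e^{-\mu t-t|\xi|^2}-1\bigr|^2\,|\hat\phi(\xi)|^2\,d\xi\longrightarrow 0\quad\text{as }t\to 0^+,
\]
since the integrand is dominated by $4|\hat\phi|^2\in L^1$ and tends to $0$ pointwise. For analyticity, the cleanest argument is that $\Delta-\mu I$ is self-adjoint on $\mathbb{X}=L^2(\mathbb{R}^N)$ with spectrum contained in $(-\infty,-\mu]$, so by the spectral theorem it generates a bounded analytic semigroup; alternatively, the Fourier multiplier $e^{-\mu z-z|\xi|^2}$ is holomorphic in $z$ on the right half-plane $\{\operatorname{Re}z>0\}$ and uniformly bounded there, which yields analyticity of $z\mapsto S(z)$ directly.

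For part (iii), a direct computation suffices: for a constant function $a$,
\[
S(t)[a](x)=\frac{e^{-\mu t}}{(4\pi t)^{N/2}}\int_{\mathbb{R}^N}a\,e^{-|x-y|^2/(4t)}\,dy=a\,e^{-\mu t}\,\|G_t\|_{L^1}=a\,e^{-\mu t},
\]
by the substitution $z=x-y$ and the normalization recorded above.

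The one point requiring a little care is the analyticity claim in part (ii); everything else is just Young's inequality together with $\|G_t\|_{L^1}=1$ and a Gaussian integral. I therefore expect the analyticity — although entirely standard — to be the main obstacle, and I would settle it via the self-adjointness (hence sectoriality) of $\Delta-\mu I$ rather than by estimating resolvents by hand.
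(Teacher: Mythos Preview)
The paper does not actually prove this lemma; it simply cites Lemmas~2.1 and~2.4 of \cite{YCWT} for the details. Your argument supplies those details directly and is correct and standard: Young's inequality with $\|G_t\|_{L^1}=1$ gives (i), the Fourier-multiplier description together with dominated convergence and the self-adjointness of $\Delta-\mu I$ gives (ii), and the Gaussian normalization gives (iii). So there is no meaningful methodological difference to compare --- you have filled in what the paper outsources.

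One small caveat worth flagging (an issue with the statement itself rather than with your argument): since the paper sets $\mathbb{X}=L^2(\mathbb{R}^N)$, a nonzero constant $a$ is not an element of $\mathbb{X}$, so part~(iii) as written lives outside the stated domain. The cited source \cite{YCWT} works in a space of bounded continuous functions under the compact--open topology, where constants are admissible, and the identity there is exactly the Gaussian computation you give. Your convolution formula $S(t)[a]=e^{-\mu t}(G_t*a)=ae^{-\mu t}$ is perfectly valid pointwise for bounded $a$; just be aware that it is a statement about the heat kernel acting on $L^\infty$ (or $BUC$) rather than on $L^2$.
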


In the remaining part, we always assume that the nonlinear term $f$ is globally bounded, that is, there exists a $N>0$ such that for any $\phi\in \mathcal{C}$, we have $f(\phi)\leq N$. By an argument of steps, we know that for any given $\phi \in \mathcal{C}$, \eqref{1} has a unique solution in $\mathcal{C}$ for all $t\geq 0$. By the variation of constants method, \eqref{1} is equivalent to the following integral equation with the given initial function
  \begin{equation}\label{2.2}
    \left\{
     \begin{array}{ll}
     \displaystyle u(t)=S(t)\phi(0) +\sigma \int_{0}^{t}S(t-s) u(s-\tau) \mathrm{d}s+ \int_{0}^{t}S(t-s)[H(f(u(s-\tau,\cdot)))+g]\mathrm{d}s,   t>0, \\u_{0}=\phi\in \mathcal{C}.
     \end{array}
     \right.
  \end{equation}
   Let $ u^{\phi}(t) $ be the solution of \eqref{2.2}. Define the infinite dimensional dynamical system $\Phi(t):  \mathcal{C}\rightarrow  \mathcal{C}$ by $\Phi(t)\phi= u^{\phi}_{t}$ for all $(t,\phi)\in\mathbb{R_{+}}\times \mathcal{C}$.

We first show that $\Phi$ admits an absorbing set.

\begin{thm}\label{thm2.1}
Assume that $f$ is bounded  and $\sigma e^{\mu \tau}-\mu<0$, then the dynamical system $\Phi$ admits an absorbing set $\mathcal{B}$ defined by
\begin{equation}\label{2.3}
\mathcal{B}=\left\{\phi \in \mathcal{C}: \|\phi\|_{ \mathcal{C}}\leq 2\left(\frac{M}{\mu}+\frac{M\sigma e^{\mu \tau}}{\mu(\mu-\sigma e^{\mu \tau})}\right)\right\},
\end{equation}
where $M=N+\|g\|$.
\end{thm}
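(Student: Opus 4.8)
The plan is to estimate $\|u^\phi(t)\|$ directly from the integral equation \eqref{2.2} using Lemma \ref{lem2.1}(i) together with the bounds $\|H\|\le 1$ and $f(\phi)\le N$, and then to close the estimate with a Gronwall-type argument adapted to the delay. First I would take norms in \eqref{2.2} and apply Lemma \ref{lem2.1}(i) to each of the three terms, obtaining
\begin{equation*}
\|u^\phi(t)\| \le e^{-\mu t}\|\phi\|_{\mathcal{C}} + \sigma\int_0^t e^{-\mu(t-s)}\|u^\phi(s-\tau)\|\,\mathrm{d}s + \int_0^t e^{-\mu(t-s)}\bigl(\|H(f(u^\phi(s-\tau,\cdot)))\| + \|g\|\bigr)\,\mathrm{d}s,
\end{equation*}
and then bound the last integral by $M\int_0^t e^{-\mu(t-s)}\,\mathrm{d}s \le M/\mu$ since $\|H(f(\cdot))\|\le \|f(\cdot)\|\le N$ and $M=N+\|g\|$. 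This already handles the forcing contribution; the remaining work is to control the middle term, which feeds back the (delayed) solution norm.

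Next I would set $v(t)=\sup_{-\tau\le\theta\le t}\|u^\phi(\theta)\|$ (so $v(0)=\|\phi\|_{\mathcal{C}}$) and, using $\|u^\phi(s-\tau)\|\le v(s)$ inside the integral, derive $\|u^\phi(t)\| \le e^{-\mu t}v(0) + \sigma\int_0^t e^{-\mu(t-s)}v(s)\,\mathrm{d}s + M/\mu$. Multiplying through by $e^{\mu t}$ and using monotonicity of $v$ (or an integrating-factor / Gronwall argument on $e^{\mu t}\|u^\phi(t)\|$), one gets a bound of the form $\|u^\phi(t)\|\le v(0)e^{(\sigma - \mu)t} + \frac{M}{\mu}$ — but the honest version has to account for the delay shift in the exponent, which is exactly where the hypothesis $\sigma e^{\mu\tau}-\mu<0$ enters. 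Concretely, writing the delayed term as $\sigma e^{\mu\tau}\int_0^t e^{-\mu(t-(s-\tau))}v(s)\,\mathrm{d}s$ converts the effective growth rate to $\sigma e^{\mu\tau}$, and the condition $\sigma e^{\mu\tau}<\mu$ makes $\mu-\sigma e^{\mu\tau}>0$ the genuine decay rate; the contraction constant $\frac{M\sigma e^{\mu\tau}}{\mu(\mu-\sigma e^{\mu\tau})}$ in \eqref{2.3} is precisely the geometric-series correction coming from iterating this delayed feedback.

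I would then pass from the pointwise bound to a bound on $\|u^\phi_t\|_{\mathcal{C}}=\sup_{\theta\in[-\tau,0]}\|u^\phi(t+\theta)\|$: for $t\ge\tau$ the supremum over $[t-\tau,t]$ inherits the same asymptotic bound, so $\limsup_{t\to\infty}\|\Phi(t)\phi\|_{\mathcal{C}} \le \frac{M}{\mu}+\frac{M\sigma e^{\mu\tau}}{\mu(\mu-\sigma e^{\mu\tau})}$, and the factor $2$ in \eqref{2.3} gives a genuine (strictly) absorbing set: for any bounded $B\subset\mathcal{C}$ there is $t_B$ with $\Phi(t)B\subset\mathcal{B}$ for all $t\ge t_B$, because the transient term $v(0)e^{-(\mu-\sigma e^{\mu\tau})t}$ decays below $\frac{M}{\mu}+\frac{M\sigma e^{\mu\tau}}{\mu(\mu-\sigma e^{\mu\tau})}$ uniformly on $B$. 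The main obstacle I anticipate is bookkeeping the delay correctly in the Gronwall step — naively replacing $\|u^\phi(s-\tau)\|$ by $v(s)$ and applying the classical Gronwall inequality gives the wrong rate $\sigma$ instead of $\sigma e^{\mu\tau}$; one must either shift the integration variable to expose the $e^{\mu\tau}$ factor or iterate the integral inequality over successive delay intervals and sum the resulting geometric series, and care is needed to verify that the series converges exactly under $\sigma e^{\mu\tau}<\mu$ and produces the stated constant.
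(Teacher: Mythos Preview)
Your proposal is essentially the paper's proof: bound each term of \eqref{2.2} via Lemma~\ref{lem2.1}(i), extract the factor $e^{\mu\tau}$ from the delay, multiply by $e^{\mu t}$, and apply Gronwall with effective rate $\beta=\sigma e^{\mu\tau}$. The one technical difference is how the $e^{\mu\tau}$ is produced: rather than introducing a running maximum $v(t)$ or shifting the integration variable, the paper passes directly from $\|u(t)\|$ to $\|u_t\|_{\mathcal{C}}$ by replacing $t$ with $t+\xi$, $\xi\in[-\tau,0]$, and using $e^{-\mu(t+\xi)}\le e^{\mu\tau}e^{-\mu t}$ together with $\|u(s-\tau)\|\le\|u_s\|_{\mathcal{C}}$; this yields the closed inequality
\[
\|u_t\|_{\mathcal{C}} \le e^{\mu\tau}e^{-\mu t}\|\phi\|_{\mathcal{C}} + \sigma e^{\mu\tau}\int_0^t e^{-\mu(t-s)}\|u_s\|_{\mathcal{C}}\,\mathrm{d}s + \frac{M}{\mu}
\]
in one stroke, after which the standard Gronwall inequality (no further delay bookkeeping needed) gives the stated bound. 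This sidesteps exactly the obstacle you flagged---keeping track of the delay inside Gronwall---and is the cleanest way to arrive at the constant in \eqref{2.3}.
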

\begin{proof}
It follows from \eqref{2.2}, Lemma \ref{lem2.1} and boundedness of $f$ that
\begin{equation}\label{2.4}
\begin{aligned}
\left\|u(t)\right\|\leq &\left\| S(t)\phi(0) \right\|+\left\|\sigma \int_{0}^{t}S(t-s) u(s-\tau) \mathrm{d}s\right\|+\left\|\int_{0}^{t}S(t-s)[H(f(u(s-\tau,\cdot)))+g]\mathrm{d}s\right\| \\
\leq & e^{-\mu t}\left\|\phi(0)\right\|+ \sigma \int_{0}^{t} e^{-\mu(t-s) } \|u(s-\tau)\| \mathrm{d} s+\int_{0}^{t} e^{-\mu(t-s) } [\|f(u(s-\tau,\cdot))\|+\|g\|] \mathrm{d} s\\
\leq & e^{-\mu t}\left\|\phi(0)\right\|+\sigma \int_{0}^{t} e^{-\mu(t-s) } \|u(s-\tau)\| \mathrm{d} s+\frac{M(1-e^{-\mu t})}{\mu}.
\end{aligned}
\end{equation}
Thus, for $\xi\in [-\tau, 0]$, we have
\begin{equation}\label{2.5}
\begin{aligned}
\left\|u(t+\xi)\right\|
\leq & e^{-\mu (t+\xi)}\left\|\phi(0)\right\|+\sigma \int_{0}^{t+\xi} e^{-\mu(t+\xi-s) } \|u(s-\tau)\| \mathrm{d} s+\frac{M(1-e^{-\mu (t-\xi)})}{\mu}\\
\leq & e^{\mu \tau}e^{-\mu t }\left\|\phi(0)\right\|+\sigma e^{\mu \tau}\int_{0}^{t} e^{-\mu(t-s) } \|u(s-\tau)\| \mathrm{d} s+\frac{M }{\mu}.
\end{aligned}
\end{equation}
Keeping in mind that $\left\|u_{t}\right\|_{\mathcal{C}}=\sup \left\{\|u(t+\xi)\|: \xi \in[-\tau, 0]\right\},$  we have
\begin{equation}\label{2.6}
\begin{aligned}
\left\|u_t\right\|_{\mathcal{C}}\leq & e^{\mu \tau}e^{-\mu t }\left\|\phi\right\|_{\mathcal{C}}+\sigma e^{\mu \tau}\int_{0}^{t} e^{-\mu(t-s) } \|u_s\|_{\mathcal{C}} \mathrm{d} s+\frac{M }{\mu}.
\end{aligned}
\end{equation}
Multiplying both sides of \eqref{2.6} by $e^{\mu t}$ implies
\begin{equation}\label{2.7}
\begin{aligned}
 e^{\mu t}\left\|u_t \right\|\leq & e^{\mu \tau}\left\|\phi\right\|_{\mathcal{C}}+\sigma e^{\mu \tau}\int_{0}^{t} e^{ \mu s } \|u_s\|_{\mathcal{C}} \mathrm{d} s+\frac{Me^{\mu t}}{\mu}.
\end{aligned}
\end{equation}
Applying Gr{o}nwall's inequality yields
\begin{equation}\label{2.8}
\begin{aligned}
e^{\mu t} \left\|u_t\right\|_{\mathcal{C}}\leq &(\frac{M}{\mu}e^{\mu t}+e^{\mu \tau}\left\|\phi\right\|_{\mathcal{C}})+\frac{M\beta e^{\beta t}}{\mu(\mu-\beta)}[e^{(\mu-\beta)t}-1]+
 (e^{\beta t}-1) e^{\mu \tau} \left\|\phi\right\|_{\mathcal{C}},
\end{aligned}
\end{equation}
and hence
\begin{equation}\label{2.9}
\begin{aligned}
  \left\|u_t\right\|_{\mathcal{C}}\leq & (\frac{M}{\mu}+e^{\mu (\tau-t)}\left\|\phi\right\|_{\mathcal{C}})+\frac{M\beta}{\mu(\mu-\beta)}[1-e^{-(\mu-\beta)t}]+
 e^{\mu \tau} \left\|\phi\right\|_{\mathcal{C}}e^{(\beta-\mu) t}\\
 \leq& (\frac{M}{\mu}+\frac{M\beta}{\mu(\mu-\beta)})+(e^{\mu (\tau-t)}\left\|\phi\right\|_{\mathcal{C}}+e^{\mu \tau} \left\|\phi\right\|_{\mathcal{C}}e^{(\beta-\mu) t}),
 \end{aligned}
\end{equation}
where $\beta=\sigma e^{\mu \tau}$. Since we have assumed that $\sigma e^{\mu \tau}-\mu<0$, one can see, for any bounded set $ \mathcal{D} \subset \mathcal{C}$, there exists a $T_{\mathcal{D}}>0$ such that for all $t\geq T_{\mathcal{D}}$
\begin{equation}\label{2.9}
\begin{aligned}
 e^{\mu (\tau-t)}\left\|\phi\right\|_{\mathcal{C}}+e^{\mu \tau} \left\|\phi\right\|_{\mathcal{C}}e^{(\beta-\mu) t}\leq \frac{M}{\mu}+\frac{M\beta}{\mu(\mu-\beta)},
 \end{aligned}
\end{equation}
indicating that
\begin{equation}\label{2.10}
\begin{aligned}
\left\|u_t\right\|_{\mathcal{C}} \leq & 2\left(\frac{M}{\mu}+\frac{M\beta}{\mu(\mu-\beta)}\right).
\end{aligned}
\end{equation}
That is, $\mathcal{B}$ is an absorbing set for $\Phi$.
This completes the proof.
\end{proof}

Throughout the remaining part of this paper, we always assume that $f$ is globally Lipschitz, that is, $f$ satisfies

$\mathbf{Hypothesis\  A1:}$
$\left\|f\left(\phi_1\right)-f\left(\phi_2\right)\right\| \leq L_f\left\|\phi_1-\phi_2\right\|_{\mathcal{C}}, \text { for any } \phi_1, \phi_2 \in \mathcal{C}$.\\
Let $\{\Phi(t)\}_{t\geq 0}$ be the dynamical system generated by \eqref{1}. Since the norm of the nonlocal operator $H\leq 1$, by using the same technique for establishing \cite[Theorem 4.1]{HCEN} with minor adjustment, one can show that in the case $\sigma e^{\mu \tau}-\mu<0$ and $\mathbf{Hypothesis\  A1}$ holds, then $\{\Phi(t)\}_{t\geq 0}$ admits a global attractor $\mathcal{A}\subset \mathcal{C}$,  which is a compact invariant set and attracts every bounded set in $\mathcal{C}$.

In order to introduce the squeezing property established in \cite{HCEN}, we introduce more notations. Let   $K$ be defined in Lemma 3.4 of \cite{HCEN} and denote the ball centered at 0 with radius $K$ by $\Omega_K= \{x \in \mathbb{R}^N:|x|
 <K \}$, its boundary by $\partial \Omega_K=\left\{x \in \mathbb{R}^N:|x|=K\right\}$ and its complement by $\Omega_K^C=\left\{x \in \mathbb{R}^N:|x|\geq K\right\}$ respectively.  Let $\mathbb{X}_{\Omega_K}$ be the Hilbert space $\mathbb{X}_{\Omega_K}=\{\phi \in L^2(\Omega_K): \phi(\partial \Omega_K)=0\}$, where $L^2(\Omega_K)$ is the square Lesbegue integrable functions on $\Omega_K$.  Denote by $\mathcal{C}_{\Omega_K}=C([-\tau, 0],\mathbb{X}_{\Omega_K})$ the set of all  continuous functions from $[-\tau, 0]$ to $\mathbb{X}_{\Omega_K}$ equipped with the usual supremum norm $\|\phi\|_{\mathcal{C}_{\Omega_K}} =\sup\{\| \phi(\xi)\|_{\mathbb{X}_{\Omega_K}} :\xi \in [-\tau,0]\}$ for all  $\phi\in \mathcal{C}_{\Omega_K}$.

Define $\chi_{\Omega_K}$ and $\chi_{\Omega_K^C}$ as the characteristic functions on $\Omega_K$ and  $\Omega_K^C$ respectively, that is
\begin{equation}\label{2.12a}
\chi_{\Omega_K}(x)=\left\{\begin{array}{l}0, x \in \Omega_K^C,\\ 1,x \in \Omega_K,\end{array}\right.
\end{equation}
and
\begin{equation}\label{2.13a}
\chi_{\Omega_K^C}(x)=\left\{\begin{array}{l}0, x \in \Omega_K,\\ 1, x \in \Omega_K^C.\end{array}\right.
\end{equation}
 Set $u(x,t)=v(x,t)+w(x,t)$, with $v(x,t)=u(x,t) \chi_{\Omega_K}(x)$ and $w(x,t)=u(x,t) \chi_{\Omega_K^C}(x)$
for any $t \geq -\tau$ and $x \in \mathbb{R}^N$, then $v(t, x)$ and $w(t, x)$ satisfy
\begin{equation}\label{2.14}
\left\{\begin{array}{l}
\frac{\partial v(x,t)}{\partial t}=\Delta v(x,t)-\mu v(x,t)+\sigma v(x,t-\tau)+H(f \left(u(\cdot,t-\tau)\right))(x)\chi_{\Omega_K}(x)+g(x)\chi_{\Omega_K}(x), \\
v(x,s)=\phi(x, s) \chi_{\Omega_K}(x)\triangleq \varphi, s \in[-\tau, 0], x \in \mathbb{R}^N,\\
v(x,t)=0, t \in(0, \infty), x \in \partial \Omega_K
\end{array}\right.
\end{equation}
and
\begin{equation}\label{2.4}
\left\{\begin{array}{l}
\frac{\partial w(x,t)}{\partial t}=\Delta w(x,t)-\mu w(x,t)+\sigma w(x,t-\tau)+H(f \left(u(\cdot,t-\tau)\right))(x)\chi_{\Omega_K^C}(x)+g(x)\chi_{\Omega_K^C}(x), \\
w(x,s)=\phi(x, s) \chi_{\Omega_K^C}(x)\triangleq \psi, s \in[-\tau, 0], x \in \mathbb{R}^N,
\end{array}\right.
\end{equation}
respectively, in which $u$ is the solution to \eqref{1}.

We first consider the following linear part of \eqref{2.3} on $\mathbb{X}_{\Omega_K}$.
\begin{equation}\label{5.1}
\left\{\begin{array}{l}
\frac{\partial \tilde{v}(x,t)}{\partial t}=\Delta \tilde{v}(x,t)-\mu \tilde{v}(x,t)+\sigma \tilde{v}(x,t-\tau), \\
\tilde{v}(x,s)=\phi(x, s)\chi_{\Omega_K}(x), s \in[-\tau, 0], x \in  \Omega_K,\\
\tilde{v}(x,t)=0, t \in(0, \infty), x \in \partial \Omega_K.
\end{array}\right.
\end{equation}

It follows from \cite[Theorem 2.6]{WJ}  that   \eqref{5.1} admits a global solution   $\tilde{v}^\phi(\cdot):[-r, \infty] \rightarrow \mathbb{X}_{\Omega_K}$. Define the linear semigroup  $U(t): \mathcal{C}_{\Omega_K}\rightarrow \mathcal{C}_{\Omega_K}$ by $U(t)\phi=\tilde{v}^\phi_t(\cdot)$. Let  $A_U: \mathcal{C}_{\Omega_K}\rightarrow \mathcal{C}_{\Omega_K}$ be the infinitesimal generator of $U(t)$.

Let $0<\mu_{1, K} \leq \mu_{2, K} \leq \cdots \leq \mu_{m, K} \leq \cdots, \quad \mu_{m, K} \rightarrow+\infty \quad \text { as } \quad m \rightarrow+\infty$ be eigenvalues of the following eigenvalue problem on $\Omega_K$:
 \begin{equation}\label{3.9}
\begin{aligned}
-\Delta u(x)=\mu u(x),\left.\quad u(x)\right|_{x \in \partial \Omega_K}=0, \quad x \in \Omega_K,
\end{aligned}
\end{equation}
with corresponding eigenfunctions $\left\{e_{m, K}\right\}_{m \in \mathbb{N}}$.

Since $A_U$ is compact, it follows from Theorem 1.2 (i) in \cite{WJ} that the spectrum of $A_U$  are point spectra, which we denote by $\varrho_1>\varrho_2>\cdots$ with multiplicity $n_1, n_2,\cdots$.
Moreover, it follows from  \cite{WJ} that the characteristic values  $\varrho_1>\varrho_2>\cdots$  of  $A_U$ are the roots of the following characteristic equation
 \begin{equation}\label{3.10}
\begin{aligned}
\mu^2_{m, K} -\left(\lambda+\mu-\sigma e^{-\lambda \tau}\right) =0, m=1,2, \cdots,
\end{aligned}
\end{equation}
where  $\varrho_1$ is the first eigenvalue of  $A_U$  defined as
 \begin{equation}\label{3.11}
\begin{aligned}
\varrho_1=\max \left\{\operatorname{Re} \lambda: \mu^2_{m, K} -\left(\lambda+\mu-\sigma e^{-\lambda \tau}\right) =0\right\}, n=1,2, \cdots.
\end{aligned}
\end{equation}

It follows from Theorem 1.10 on P71 in \cite{WJ} that there exists a number such that all eigenvalues of $A_U$ lie in its left, implying that there exists at most a finite number of  eigenvalues of $A_U$ which are positive.  Hence,   there exists $m\geq 1$ such that  $\varrho_m<0$, and there is a
 \begin{equation}\label{3.12}
\begin{aligned}
k_m=n_1+n_2+\cdots+n_m
\end{aligned}
\end{equation}
dimensional  subspace $\mathcal{C}_{\Omega_K}^U$ such that $\mathcal{C}_{\Omega_K}$ is decomposed by  $A_U$  as
 \begin{equation}\label{3.12a}
\begin{aligned}
\mathcal{C}_{\Omega_K}= \mathcal{C}_{\Omega_K} ^U \bigoplus  \mathcal{C}_{\Omega_K} ^S,
\end{aligned}
\end{equation}
where $\mathcal{C}_{\Omega_K} ^S$ is the complement subspace of $ \mathcal{C}_{\Omega_K} ^U $with respect to $\mathcal{C}_{\Omega_K}$. Let $P_{k_m}$ and $Q_{k_m}$ be the projection of $\mathcal{C}_{\Omega_K}$ onto $ \mathcal{C}_{\Omega_K}^U$ and $ \mathcal{C}_{\Omega_K}^S$ respectively, that is
   \begin{equation}\label{3.12b}
\begin{aligned}
\mathcal{C}_{\Omega_K}^U=P_{k_m}\mathcal{C}_{\Omega_K}
\end{aligned}
\end{equation}
and
 \begin{equation}\label{3.12c}
\begin{aligned}
\mathcal{C}_{\Omega_K}^S=(I-P_{k_m})\mathcal{C}_{\Omega_K}=Q_{k_m}\mathcal{C}_{\Omega_K}.
\end{aligned}
\end{equation}
It follows from the definition of $P_{k_m}$ and $Q_{k_m}$ that
\begin{equation}\label{3.13}
\begin{aligned}
\left\|U(t)Q_{k_m} x\right\| & \leq K_m e^{\varrho_m t}\|x\|, & & t \geq0,
\end{aligned}
\end{equation}
where $K_m$ is a positive constant.

For any $\phi\in \mathcal{C}$, define the map $P_{\chi(\Omega_K^C)}: \mathcal{C}\rightarrow \mathcal{C} $ and $P_{\chi(\Omega_K)}: \mathcal{C}\rightarrow \mathcal{C}_{\Omega_K}$ by
\begin{equation}\label{6.1a}
\begin{aligned}
P_{\chi(\Omega_K)} \phi=\chi(\Omega_K)\phi
\end{aligned}
\end{equation}
and
\begin{equation}\label{6.1}
\begin{aligned}
P_{\chi(\Omega_K^C)} \phi=\chi(\Omega_K^C)\phi
\end{aligned}
\end{equation}
respectively, where $\chi(\Omega_K)$ and $\chi(\Omega_K^C)$ are the characteristic functions defined in \eqref{2.12a} and \eqref{2.13a}.
Then, by \eqref{3.12a}, we have
\begin{equation}\label{6.2}
\begin{aligned}
\mathcal{C}=P_{k_m}P_{\chi(\Omega_K)}\mathcal{C}  \bigoplus  Q_{k_m}P_{\chi(\Omega_K)}\mathcal{C} \bigoplus P_{\chi(\Omega_K^C)}\mathcal{C} \triangleq  \mathcal{C}_{\Omega_K}^U \bigoplus  \mathcal{C}_{\Omega_K}^S \bigoplus \mathcal{C}_{\Omega_K^C}.
\end{aligned}
\end{equation}
where $P_{k_m}$ and $Q_{k_m}$ are defined by \eqref{3.12b} and \eqref{3.12c}.
Define
\begin{equation}\label{6.2a}
\begin{aligned}
\mathcal{P}=P_{k_m}P_{\chi(\Omega_K)}, \mathcal{Q}=Q_{k_m}P_{\chi(\Omega_K)},\mathcal{R}= P_{\chi(\Omega_K^C)}.
\end{aligned}
\end{equation}
It follows from  \cite[Lemmas 5.1 and 5.2]{HCEN}  that
\begin{equation}\label{6.3}
\begin{aligned}
\|\mathcal{P}[\Phi(t)\varphi -\Phi(t)\psi ]\|_{\mathcal{C}(\Omega_K)} \leq e^{(L_f+\varrho_1)t}\|\varphi-\psi\|_{\mathcal{C}},
\end{aligned}
\end{equation}
 \begin{equation}\label{6.4}
\begin{aligned}
\|\mathcal{Q}[\Phi(t)\varphi -\Phi(t)\psi ]\|_{\mathcal{C}(\Omega_K)} \leq (K_me^{\varrho_m t}+\frac{K_mL_f }{\varrho_1+L_f-\varrho_m} e^{(L_f+\varrho_1)t})\|\varphi-\psi\|_{\mathcal{C}}
\end{aligned}
\end{equation}
and
\begin{equation}\label{6.5}
\begin{aligned}
\|\mathcal{R}[\Phi(t)\varphi -\Phi(t)\psi]\|_{\mathcal{C}(\Omega_K^C)} \leq \sqrt{c_2}e^{\frac{1}{2}[c_2(\sigma+ L_{f}^{2})-(\mu-\sigma-1)]t}\|\varphi-\psi\|_{\mathcal{C}}.
\end{aligned}
\end{equation}
Moreover, by \eqref{3.12} and \eqref{3.12a}, we can see $\mathcal{P}$ has $k_m$ dimension range space, that is $P_{k_m}\mathcal{C}_{\Omega_K}$ is a  $k_m$ dimensional subspace of $\mathcal{C}$.

\section{Exponential attractors}
This section is devoted to the construction of exponential attractors based on the squeezing properties \eqref{6.3}-\eqref{6.5} and the global attractor $\mathcal{A}$. We first   give the following definition of exponential attractors.
\begin{defn}\label{defn2.1} A  non-empty compact subset $\mathcal{M}\subset \mathcal{C}$ is called an
exponential attractor of the dynamical system $\{\Phi(t)\}_{t\geq 0}$ if\\
(i) $\mathcal{M}$ is positively invariant, i.e.
$$
\Phi(t)\mathcal{M} \subset \mathcal{M}  \quad \forall t \in \mathbb{R}^+.
$$\\
(ii) The fractal dimension $\operatorname{dim}_f(\mathcal{M})$ of $\mathcal{M}$ is bounded,
where $\operatorname{dim}_f(\mathcal{M})$ is  defined as
$$
\operatorname{dim}_f(\mathcal{M})=\lim _{\varepsilon \rightarrow 0} \frac{\ln \left(N_{\varepsilon}^X(\mathcal{M})\right)}{\ln \left(\frac{1}{\varepsilon}\right)},
$$
and $N_{\varepsilon}^X(\mathcal{M})$ denotes the minimal number of $\varepsilon$-balls in $X$ with centres in $X$ needed to cover $\mathcal{M}$.\\
(iii) $\mathcal{M}$ exponentially attracts all bounded sets, i.e. there exists a constant $\omega>0$ such that for every bounded subset $D \subset X$
$$
\lim _{t \rightarrow \infty} e^{-\omega t} \operatorname{dist}_X(\Phi(t) D, \mathcal{M})=0,
$$
where $\mathrm{dist}_X(A,B)$ denotes the Hausdorff semi-distance  between $A$ and $B$, defined as
$$
\mathrm{dist}_X(A, B)=\sup _{a \in A} \inf _{b \in B} d(a, b), \quad \text { for } A, B \subseteq X.
$$
\end{defn}

We first construct exponential  attractors for the discrete version $\{\Phi(n) \}$ of $\{\Phi(t)\}_{t\geq 0}$.
\begin{thm}\label{thm2.2} Let $k_m, \varrho_{1}, \varrho_{m}$ and $K_m$ be  defined in \eqref{3.11}  and \eqref{3.13} respectively,  $\mathcal{P}, \mathcal{Q}$ and $\mathcal{R}$ be  defined by \eqref{6.2a}, and $\mathcal{A}$ is the global attractor. Assume that $f$ is bounded, $\sigma e^{\mu \tau}-\mu<0$ and $\mathbf{Hypothesis\  A1}$ holds.  Moreover, assume  that there exists $\alpha>0$ such that $\zeta:=\alpha e^{(L_f+\varrho_1)}+K_me^{\varrho_m}+\frac{K_mL_f }{\varrho_1+L_f-\varrho_m}e^{(L_f+\varrho_1)}+\sqrt{c_2}e^{\frac{[c_2(\sigma+ L_{f}^{2})-(\mu-\sigma-1)]}{2}}<1$. Then, there exists an exponential attractor $\mathcal{M}$ for the discrete system $ \{\Phi(n)\}$, and the fractal dimension  is bounded  by
 \begin{equation}\label{4.5}
\begin{aligned}
\operatorname{dim}_f \mathcal{M}\leq \frac{\ln k_m +k_m \ln(2+\frac{2}{\alpha})}{-\ln \zeta}<\infty.
\end{aligned}
\end{equation}
\end{thm}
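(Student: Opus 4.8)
The plan is to carry out the Eden--Foias--Nicolaenko--Temam construction in the Banach space $\mathcal{C}$, using the squeezing estimates \eqref{6.3}--\eqref{6.5} at $t=1$ together with the covering lemma for finite-dimensional subspaces of Banach spaces, the crucial structural fact being that $\mathcal{P}$ has $k_m$-dimensional range. First I fix the time-one map $S:=\Phi(1)$ and replace the absorbing set of Theorem~\ref{thm2.1} by $\mathcal{X}:=\overline{\bigcup_{n\ge0}S^{n}\mathcal{B}}$; this set is bounded (a global Lipschitz bound for $S$ follows by adding \eqref{6.3}--\eqref{6.5}, and $\mathcal{B}$ absorbs itself after finitely many steps), closed, positively invariant under $S$, absorbing, and contains $\mathcal{A}$ with $S\mathcal{A}=\mathcal{A}$. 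Since $I=\mathcal{P}+\mathcal{Q}+\mathcal{R}$ on $\mathcal{C}$, the estimates \eqref{6.3}--\eqref{6.5} at $t=1$ give, for all $u,v\in\mathcal{X}$, that $\mathcal{P}(Su-Sv)$ lies in the $k_m$-dimensional subspace $\mathcal{C}^{U}_{\Omega_K}$ with $\|\mathcal{P}(Su-Sv)\|\le e^{L_f+\varrho_1}\|u-v\|_{\mathcal{C}}$, while $\|(\mathcal{Q}+\mathcal{R})(Su-Sv)\|\le\big(\zeta-\alpha e^{L_f+\varrho_1}\big)\|u-v\|_{\mathcal{C}}$.

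Next I prove a one-step covering estimate. If $D\subset\mathcal{X}$ has diameter $d$, then $\mathcal{P}(SD)$ is a subset of diameter at most $e^{L_f+\varrho_1}d$ of the $k_m$-dimensional normed space $\mathcal{C}^{U}_{\Omega_K}$, so by the covering lemma for finite-dimensional subspaces it can be covered by $N:=k_m(2+\tfrac{2}{\alpha})^{k_m}$ balls of radius $\tfrac{\alpha}{2}e^{L_f+\varrho_1}d$; intersecting $SD$ with the $\mathcal{P}$-preimages of these balls splits $SD$ into at most $N$ nonempty pieces, each of diameter at most $\alpha e^{L_f+\varrho_1}d+(\zeta-\alpha e^{L_f+\varrho_1})d=\zeta d$. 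Iterating from $\mathcal{X}$, with $R_0:=\operatorname{diam}\mathcal{X}$, for every $n\ge1$ the set $S^{n}\mathcal{X}$ is covered by at most $N^{n}$ sets of diameter at most $\zeta^{n}R_0$, so $N^{\mathcal{C}}_{\zeta^{n}R_0}(S^{n}\mathcal{X})\le N^{n}$. Since $\mathcal{A}=S^{n}\mathcal{A}\subset S^{n}\mathcal{X}$, letting $\varepsilon=\zeta^{n}R_0\to0$ in the definition of the fractal dimension gives $\operatorname{dim}_f\mathcal{A}\le\frac{\ln N}{-\ln\zeta}=\frac{\ln k_m+k_m\ln(2+2/\alpha)}{-\ln\zeta}$.

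It remains to exhibit $\mathcal{M}$. Let $V_n\subset S^{n}\mathcal{X}$ be a set of points chosen one from each piece of the $n$-th iterated covering, so $|V_n|\le N^{n}$, put $V:=\bigcup_{n\ge0}V_n$, and set $\mathcal{M}:=\mathcal{A}\cup\overline{\bigcup_{n\ge0}S^{n}V}$. Positive invariance is immediate from $S\mathcal{A}=\mathcal{A}$ and $S\big(\overline{\bigcup_{n\ge0}S^{n}V}\big)\subset\overline{\bigcup_{n\ge1}S^{n}V}$, whence $S\mathcal{M}\subset\mathcal{M}$. Compactness follows because $\mathcal{A}$ is compact and, for $\varepsilon>0$, choosing $p$ with $\zeta^{p}R_0<\varepsilon$, the set $\bigcup_{j+n\le p}S^{n}V_j$ is finite while $\bigcup_{j+n>p}S^{n}V_j\subset S^{p}\mathcal{X}$ (using $S\mathcal{X}\subset\mathcal{X}$) has a finite $\varepsilon$-net of at most $N^{p}$ points, so $\bigcup_{n\ge0}S^{n}V$ is totally bounded. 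For exponential attraction, any bounded $D\subset\mathcal{C}$ enters $\mathcal{X}$ after finitely many steps, after which $\operatorname{dist}_{\mathcal{C}}(\Phi(n)D,\mathcal{M})\le\operatorname{dist}_{\mathcal{C}}(S^{k}\mathcal{X},\mathcal{M})\le\zeta^{k}R_0\to0$, which yields the required exponential attraction with rate $-\ln\zeta$. Finally the same scale-$\zeta^{p}R_0$ count gives $N^{\mathcal{C}}_{\zeta^{p}R_0}(\mathcal{M})\le C\,p\,N^{p}$, so the polynomial prefactor washes out in the limit and $\operatorname{dim}_f\mathcal{M}\le\frac{\ln N}{-\ln\zeta}$, which is exactly \eqref{4.5}.

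The main obstacle is this last step together with the simultaneous demands of compactness and positive invariance: one has to be careful about exactly which covering centers, and which of their forward iterates, are placed into $\mathcal{M}$, so that $\mathcal{M}$ is positively invariant and compact while its fractal dimension is not inflated beyond that of $\mathcal{A}$ --- the standard bookkeeping of the EFNT scheme, here somewhat heavier because three projections are carried at once. A secondary point is to apply the finite-dimensional covering lemma correctly in the genuine supremum-norm (non-Hilbert) geometry of $\mathcal{C}^{U}_{\Omega_K}$, and to use \eqref{6.3}--\eqref{6.5} for an arbitrary pair of initial data in $\mathcal{X}$ rather than only for the splitting $\varphi=\chi_{\Omega_K}\phi$, $\psi=\chi_{\Omega_K^C}\phi$.
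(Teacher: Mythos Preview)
Your proposal is correct and follows essentially the same Eden--Foias--Nicolaenko--Temam scheme as the paper: iterate the time-one map, combine the three squeezing estimates \eqref{6.3}--\eqref{6.5} with Ma\~n\'e's covering lemma on the $k_m$-dimensional range of $\mathcal{P}$, and take the closure of the iterated family of centers. The paper differs only cosmetically --- it iterates directly on the global attractor $\mathcal{A}$ (building explicit centers $u^{j}=y^{j}+\mathcal{Q}\Phi(1)u+\mathcal{R}\Phi(1)u$ and sets $E^{n}=W^{n}\cup\Phi(1)E^{n-1}$) rather than on a positively invariant hull $\mathcal{X}$ of $\mathcal{B}$, and it tracks ball \emph{radii} rather than diameters; the latter point is worth adopting, since your diameter-based covering of $\mathcal{P}(SD)$ by balls of radius $\tfrac{\alpha}{2}e^{L_f+\varrho_1}d$ actually requires $k_m(2+\tfrac{4}{\alpha})^{k_m}$ balls in a general $k_m$-dimensional normed space, whereas the radius-based version used in the paper gives exactly the constant $k_m(2+\tfrac{2}{\alpha})^{k_m}$ needed for \eqref{4.5}.
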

\begin{proof}
\textbf{1) Covering of} $\Phi(m)\mathcal{A}$. The first step is to  inductively  construct a family of sets  $W^m, m\in \mathbb{N}$ that  satisfy the following properties
 \begin{equation}\label{4.6}
\left\{\begin{array}{l}
(W1) \quad W^m \subset \Phi(m) \mathcal{A}\subset \mathcal{A},\\
(W2) \quad \sharp W^m \leq [\Lambda 2^\Lambda \left(1+\frac{1}{\alpha} \right)^\Lambda]^m,\\
(W3) \quad \Phi(m) \mathcal{A}\subset \bigcup_{u \in W^m } B_{\zeta^m R_{\mathcal{A}}}(u),
\end{array}\right.
\end{equation}
and covers $\Phi(m)\mathcal{A}$, where $\sharp W^m$ represents the number of elements of $W^m$.

We first consider   construct a covering of   $\Phi(1)\mathcal{A}$. Since $\mathcal{A}$ is bounded,  there exists a constant $R_{\mathcal{A}}$ such that  $R_{\mathcal{A}}:=\sup _{u \in \mathcal{A}}\|u\|_X$. Then for any $u_{1}\in \mathcal{A}$, we have
$\mathcal{A}\subset B_{R_{\mathcal{A}}}\left(u_{1}\right)$. For any $u \in \mathcal{A}\cap B\left(u_{1}, R_{\mathcal{A}}\right)$, it follows from the squeezing properties \eqref{6.3}-\eqref{6.5} that
\begin{equation}\label{2.8}
\begin{aligned}
\|\mathcal{P}[\Phi(1)u-\Phi(1)u_1 ]\|_{\mathcal{C}(\Omega_K)} \leq e^{(L_f+\varrho_1)}\|u-u_1\|_{\mathcal{C}},
\end{aligned}
\end{equation}
 \begin{equation}\label{2.9}
\begin{aligned}
\|\mathcal{Q}[\Phi(1)u-\Phi(1)u_1 ]\|_{\mathcal{C}(\Omega_K)} \leq (K_me^{\varrho_m}+\frac{K_mL_f }{\varrho_1+L_f-\varrho_m} e^{(L_f+\varrho_1)})\|u-u_1\|_{\mathcal{C}}
\end{aligned}
\end{equation}
and
\begin{equation}\label{2.9a}
\begin{aligned}
\|\mathcal{R}[\Phi(1)u-\Phi(1)u_1]\|_{\mathcal{C}(\Omega_K^C)} \leq \sqrt{c_2}e^{\frac{1}{2}[c_2(\sigma+ L_{f}^{2})-(\mu-\sigma-1)]}\|u-u_1\|_{\mathcal{C}}.
\end{aligned}
\end{equation}
By Lemma  2.1 in \cite{30}, we can find $y_{1}^1, \ldots, y_{1}^{n_1}\in P\Phi(1)\mathcal{A}$ and $0<\alpha<1$ such that
 \begin{equation}\label{2.10}
\begin{gathered}
B_{\mathcal{P} \mathcal{C}}\left(\mathcal{P} \Phi(1) u_1, e^{\lambda_0}R_{\mathcal{A}}\right) \subset \bigcup_{j=1}^{n_1} B_{\mathcal{P} \mathcal{C}}\left(y_{1}^j, \alpha e^{\lambda_0}R_{\mathcal{A}}\right)
\end{gathered}
\end{equation}
with
 \begin{equation}\label{2.11}
\begin{gathered}
n_1\leq \Lambda 2^\Lambda \left(1+\frac{1}{\alpha}\right)^\Lambda,
\end{gathered}
\end{equation}
where $\Lambda$ is the dimension of $\mathcal{P C}$ and we have denoted by $B_{\mathcal{P} \mathcal{C}}(y, r)$ the ball in $\mathcal{P C}$ of radius $r$ and center $y$.
Take $t_0=1$ and set
 \begin{equation}\label{2.12}
\begin{gathered}
u_{1}^j=y_{1}^j+\mathcal{Q}\Phi(1) u_{1}+\mathcal{R}\Phi(1) u_{1}
\end{gathered}
\end{equation}
for $ j=1, \ldots, n_1$ and $W^1=\{u_{1}^1, u_{1}^2,\cdots, u_{1}^{n_1}\}$. Then, for any $u \in \mathcal{A}\cap B\left(u_{1}, R_{\mathcal{A}}\right)$, there exists a $j$ such that
 \begin{equation}\label{2.13}
\begin{aligned}
&\left\|\Phi\left(1\right) u-u_1^j\right\|\\
&\quad \leq\left\|\mathcal{P}\Phi\left(1\right)  u-y_1^j\right\|+\left\|\mathcal{Q } \Phi\left(1\right)  u-\mathcal{Q } \Phi\left(1\right) u_1\right\|+ \left\|\mathcal{R} \Phi\left(1\right)  u-\mathcal{R} \Phi\left(1\right) u_1\right\|\\
&\quad \leq\left(\beta e^{(L_f+\varrho_1)}+K_me^{\varrho_m}+\frac{K_mL_f }{\varrho_1+L_f-\varrho_m}e^{(L_f+\varrho_1)}+\sqrt{c_2}e^{\frac{[c_2(\sigma+ L_{f}^{2})-(\mu-\sigma-1)]}{2}}\right)R_\mathcal{A},
\end{aligned}
\end{equation}
indicating $(W3)$ is satisfied for $m=1$. Furthermore, it is clear from the definition of $W^1$ that it satisfies $(W1)$ and $(W2)$.
This completes the proof of case $m=1$.

Assume that the sets $W^l$ satisfying \eqref{4.6} have been already constructed for all $m\leq l$, i.e.,
there exists covering
 \begin{equation}\label{2.14}
\Phi(l) \mathcal{A}\subset \bigcup_{u \in W^l} B_{\zeta^{l}R_{\mathcal{A}}}(u).
\end{equation}
We construct  in the sequel the covering of $W^{l+1}$ that satisfies \eqref{4.6}.  By the semigroup property, we have
 \begin{equation}\label{2.15}
\begin{aligned}
\Phi(l+1) \mathcal{A}& =\Phi(1) \Phi(l) \mathcal{A} \subset \bigcup_{u \in W^l} \Phi(1) B_{\zeta^lR_{\mathcal{A}}}(u),
\end{aligned}
\end{equation}
that is $\Phi(l+1) \mathcal{A}$ can be covered by $\bigcup_{u \in W^l} \Phi(1) B_{\zeta^lR_{\mathcal{A}}}(u)$. We construct in the following a covering of $\bigcup_{u \in W^l} \Phi(1) B_{\zeta^lR_{\mathcal{A}}}(u)$.

Let $u_l \in W^l$. It follows from the induction hypotheses  $(W1)$ and $(W3)$ that
 \begin{equation}\label{2.16}
u_l \in \Phi(l) \mathcal{A}\subset \bigcup_{u \in W^l} B_{\zeta^{l}R_{\mathcal{A}}}(u).
\end{equation}
 Therefore, for any $u_l \in \mathcal{A}\cap \bigcup_{u \in W^l} \Phi(1) B_{\zeta^lR_{\mathcal{A}}}(u_l)$, using again the squeezing properties,
 \begin{equation}\label{2.17}
\begin{aligned}
\|\mathcal{P}[\Phi(1)u-\Phi(1)u_l]\|_{\mathcal{C}(\Omega_K)} \leq e^{(L_f+\varrho_1)}\zeta^lR_{\mathcal{A}},
\end{aligned}
\end{equation}
 \begin{equation}\label{2.18}
\begin{aligned}
\|\mathcal{Q}[\Phi(1)u-\Phi(1)u_l]\|_{\mathcal{C}(\Omega_K)} \leq (K_me^{\varrho_m t}+\frac{K_mL_f }{\varrho_1+L_f-\varrho_m} e^{(L_f+\varrho_1)})\zeta^lR_{\mathcal{A}}
\end{aligned}
\end{equation}
and
\begin{equation}\label{2.18a}
\begin{aligned}
\|\mathcal{R}[\Phi(1)u-\Phi(1)u_l]\|_{\mathcal{C}(\Omega_K^C)} \leq \sqrt{c_2}e^{\frac{1}{2}[c_2(\sigma+ L_{f}^{2})-(\mu-\sigma-1)]}\zeta^lR_{\mathcal{A}}.
\end{aligned}
\end{equation}
By Lemma  2.1 in \cite{30}, we can find $y_l^1, \ldots, y_l^{n_l}\in \Phi(1) \Phi(l) \mathcal{A}$ such that
 \begin{equation}\label{2.19}
\begin{gathered}
B_{P X}\left(P \Phi(1) u_l,  M_1e^{\lambda_0 }\zeta^lR_{\mathcal{A}}\right) \subset \bigcup_{j=1}^{n_l} B_{\mathcal{P} \mathcal{C}}\left(y_l^j, \alpha e^{\lambda_0}\zeta^lR_{\mathcal{A}}\right)
\end{gathered}
\end{equation}
with
 \begin{equation}\label{2.20}
\begin{gathered}
n_l\leq \Lambda 2^\Lambda \left(1+\frac{1}{\alpha}\right)^\Lambda,
\end{gathered}
\end{equation}
where $\Lambda$ is the dimension of $P X$.
Set
 \begin{equation}\label{2.21}
\begin{gathered}
u_l^j=y_l^j+(I-P) \Phi(1) u_l
\end{gathered}
\end{equation}
for $ j=1, \ldots, n_l$. Then, for any $u \in \mathcal{A}\cap \bigcup_{u \in W^l} \Phi(1) B_{\zeta^lR_{\mathcal{A}}}(u)$, there exists a $j$ such that
 \begin{equation}\label{2.22}
\begin{aligned}
\left\|\Phi(1) u-u_l^j\right\|
& \leq\left\|\mathcal{P} \Phi(1) u-y_l^j\right\|+\left\|\mathcal{Q}\Phi(1) u-\mathcal{Q} \Phi(1) u_l\right\|+\left\|\mathcal{R}\Phi(1) u-\mathcal{R} \Phi(1) u_l\right\| \\
& \leq\left(\alpha e^{(L_f+\varrho_1)}+K_me^{\varrho_m}+\frac{K_mL_f }{\varrho_1+L_f-\varrho_m}e^{(L_f+\varrho_1)}+\sqrt{c_2}e^{\frac{[c_2(\sigma+ L_{f}^{2})-(\mu-\sigma-1)]}{2}}\right)\zeta^lR_{\mathcal{A}}\\
&=\zeta^{l+1}R_{\mathcal{A}}.
\end{aligned}
\end{equation}
This implies that $ \bigcup_{u \in W^l} \Phi(1) B_{\zeta^lR_{\mathcal{A}}}(u)$ is  covered by balls with radius $\zeta^{l+1}R_{\mathcal{A}}$ and centers $\{u_l^1, u_l^2, \\
\cdots, u_l^{n_l}\}$ and hence $(W3)$ holds. Denote the new set of centres by $W^{l+1}$. From the  induction hypothesis, we have $\sharp W^{l}\leq [\Lambda 2^\Lambda \left(1+\frac{1}{\alpha} \right)^\Lambda]^l$, which yields $\sharp W^{l+1} \leq n_l \sharp W^{l} \leq [\Lambda 2^\Lambda \left(1+\frac{1}{\alpha} \right)^\Lambda]^{l+1}$ and proves $(W2)$. By construction the set of centres $W^{l+1}$, we can see $W^{l+1} \subset \Phi(1) \Phi(l) \mathcal{A}=\Phi(l+1)\mathcal{A}$,  which concludes the proof of the properties $(W1)$.

\textbf{2) Construction of random exponential attractor for $\{\Phi(n) \}$.}
We define $E^1:=W^1$  and set
 \begin{equation}\label{2.24}
\begin{aligned}
E^{n+1}:=W^{n+1}\cup \Phi(1) E^{n}, \quad n \in \mathbb{N}^+.
\end{aligned}
\end{equation}
Then, if follows from the definition of the sets $E^n(k)$, the properties of the sets $W^n$ and the positive invariance of the absorbing set $\mathcal{A}$ that the family of sets $E^n, n \in \mathbb{Z}^+$ satisfies\\
(E1) $\quad \Phi(1) E^{n-1} \subset E^{n}, \quad E^n \subset \Phi(n)\mathcal{A}$,\\
(E2) $\quad E^n=\bigcup_{i=0}^n \Phi(l) W^{n-i}, \quad \sharp E^n \leq \sum_{i=0}^n (\Lambda 2^\Lambda \left(1+ \alpha \right)^\Lambda)^i$,\\
(E3) $\quad \Phi(n) \mathcal{A}\subset \bigcup_{u \in E^n} B_{\zeta^n R_{\mathcal{A}}}(u)$.

Based on the family of sets $E^n$, we define $\mathcal{M}:=\overline{\bigcup_{n \in \mathbb{N}^+} E^n}$ and show that it yields an exponential  attractor for the semigroup $\{\Phi(n)\}$.

\textbf{Positive invariance of  $\mathcal{M}$.} It follows from property $(E 1)$ that, for all $l \in  \mathbb{N}^+$, we have
  \begin{equation}\label{2.25}
\begin{aligned}
\Phi(l) \bigcup_{n \in \mathbb{N}^+} E^n &=\bigcup_{n \in \mathbb{N}^+} \Phi(l) E^n\subset \bigcup_{n \in \mathbb{N}^+} E^{n+l}\subset \bigcup_{n \in \mathbb{N}^+} E^n.
\end{aligned}
\end{equation}
Thanks to the continuous property in $\left(\mathcal{H}_1\right)$, we can take closure in both sides of \eqref{2.25}, giving rise to
 \begin{equation}\label{2.26}
\begin{aligned}
\Phi(l) \mathcal{M}& :=\Phi(l) \overline{\bigcup_{n \in \mathbb{N}^+} E^n} &=\overline{\bigcup_{n \in \mathbb{N}^+} \Phi(l) E^n}\subset \overline{\bigcup_{n \in \mathbb{N}^+} E^{n+l}}\subset \overline{\bigcup_{n \in \mathbb{N}^+} E^n}=\mathcal{M}.
\end{aligned}
\end{equation}

\textbf{Compactness and finite dimensionality of $\mathcal{M}$.} We  prove in the sequel that  the set $\mathcal{M}$ is non-empty, precompact and of finite fractal dimension.  It follows from $(E1)$ that  for any $l \in \mathbb{N}^+$, it holds
 \begin{equation}\label{2.27}
\begin{aligned}
E^l & \subset \Phi(l) \mathcal{A}.
\end{aligned}
\end{equation}
Thus, for any $l \in \mathbb{N}^+$ we have
 \begin{equation}\label{2.28}
\begin{aligned}
\bigcup_{n \in \mathbb{N}^+} E^n=\bigcup_{n=0}^l E^n \cup \bigcup_{n=l+1}^{\infty} E^n \subset \bigcup_{n=0}^l E^n \cup \Phi(l) \mathcal{A}.
\end{aligned}
\end{equation}
Due to $\zeta<1$, for any  $\varepsilon>0$, there exists $l \in \mathbb{N}$ such that
 \begin{equation}\label{2.29}
\begin{aligned}
\zeta^{l+1} R_{\mathcal{A}} \leq \varepsilon<\zeta^{l}  R_{\mathcal{A}},
\end{aligned}
\end{equation}
which combined with the fact
 \begin{equation}\label{2.30}
\begin{aligned}
\Phi(l) \mathcal{A}\subset \bigcup_{u \in W^l} B_{\varepsilon}(u),
\end{aligned}
\end{equation}
ensures that the estimate of the number of $\varepsilon$-balls in $X$ needed to cover $\bigcup_{n\in \mathbb{N}^+} E^n$ is
 \begin{equation}\label{2.31}
\begin{aligned}
N_{\varepsilon}\left(\bigcup_{n\in \mathbb{N}^+} E^n\right)& \leq \sharp\left(\bigcup_{n=0}^l E^l\right)+\sharp W^l \leq(l+1) \sharp E^l+\left[\Lambda 2^\Lambda \left(1+\frac{1}{\alpha} \right)^\Lambda\right]^l \\
& \leq2(l+1)^2 \left[\Lambda 2^\Lambda \left(1+\frac{1}{\alpha} \right)^\Lambda\right]^l.
\end{aligned}
\end{equation}
 This proves the precompactness of $\bigcup_{n\in \mathbb{N}^+} E^n$ in $X$, which directly implies the closure $\mathcal{M}:=\overline{\bigcup_{n\in \mathbb{N}^+} E^n}$ is compact in $X$, since  $X$ is a Banach space.

It follows from \eqref{2.29} and \eqref{2.31} that   the fractal dimension of the set  $\mathcal{M}$ can be estimated by
 \begin{equation}\label{2.32}
\begin{aligned}
\operatorname{dim}_f \mathcal{M} & =\limsup_{\varepsilon \rightarrow 0} \frac{\ln N_{\varepsilon}(\mathcal{M})}{-\ln \varepsilon}\\
& \leq \limsup_{l \rightarrow \infty} \frac{\ln 2(l+1)^2+ \ln [\Lambda 2^\Lambda \left(1+ \frac{1}{\alpha} \right)^\Lambda]^l}{-\ln (\zeta^l R_{\mathcal{A}})}\\
&=\frac{\ln\Lambda +\Lambda\ln(2+ \frac{2}{\alpha})}{-\ln \zeta}<\infty.
\end{aligned}
\end{equation}

\textbf{3) Exponential attraction of $\mathcal{M}$.} We need to show that the set $\mathcal{M}$ exponentially attracts all bounded subsets of $X$ at time $l \in \mathbb{N}^+$. It follows from assumptions $\left(\mathcal{H}_1\right)$ that, for any bounded subset $D \subset X$, there exists  $n_{D} \in \mathbb{N}^+$ such that $\Phi(l) D \subset \mathcal{A}$ for all $l \geq n_{D}$.
If $l \geq n_{D}+1$, that is $l\geq n_{D}+n_0$ with some $n_0 \in \mathbb{N}$, then
 \begin{equation}\label{2.33}
\begin{aligned}
\operatorname{dist}_{X} \left(\Phi(l) D, \mathcal{M}\right)&=\operatorname{dist}_{X}\left(\Phi(l) D, \overline{\bigcup_{n=0}^{\infty}E^n}\right)  \\& \leq \operatorname{dist}_{X}\left(S\left(n_0\right) S\left(l-n_0\right) D, \bigcup_{n=0}^{\infty} E^n\right) \\
& \leq \operatorname{dist}_{X}\left(S\left(n_0\right) \mathcal{A}, \bigcup_{n=0}^{\infty} E^n\right) \\
& \leq \operatorname{dist}_{X}\left(S\left(n_0\right) \mathcal{A}, E^{n_0}\right) \\
& \leq \zeta^{n_0} R_{ \mathcal{A}} \leq c e^{-\omega n}
\end{aligned}
\end{equation}
for some constants $c \geq 0$ and $\omega>0$, since $\zeta<1$. This completes the proof.
\end{proof}

By adopting the same procedure as the proof of  \cite[Theorem 3.2]{C9}, we have  the following  results about the existence of exponential attractors for continuous semigroup in Banach spaces.

\begin{thm}\label{thm2.3} Assume the conditions of Theorem \ref{thm2.2} hold. Then, the dynamical system $\{ \Phi(t) \}_{t\geq0}$ admits an exponential attractor $\mathcal{M}$ whose fractal dimension  is bounded  by
 \begin{equation}\label{2.34}
\begin{aligned}
\operatorname{dim}_f \mathcal{A}\leq \frac{\ln\Lambda +\Lambda\ln(2+ \frac{2}{\alpha})}{-\ln \zeta}<\infty,
\end{aligned}
\end{equation}
where $\zeta$ is defined in Theorem \ref{thm2.1}.
\end{thm}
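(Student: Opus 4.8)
The plan is to upgrade the discrete exponential attractor $\mathcal{M}$ produced by Theorem~\ref{thm2.2} for the time-one map $\Phi(1)$ to an exponential attractor for the whole continuous semigroup $\{\Phi(t)\}_{t\ge 0}$, following the standard argument of \cite[Theorem~3.2]{C9}. First I would define the candidate set
\begin{equation}\label{2.35}
\widetilde{\mathcal{M}}=\bigcup_{t\in[0,1]}\Phi(t)\mathcal{M},
\end{equation}
where $\mathcal{M}\subset\mathcal{A}$ is the discrete exponential attractor from Theorem~\ref{thm2.2}. Positive invariance of $\widetilde{\mathcal{M}}$ under $\{\Phi(t)\}_{t\ge0}$ follows directly from the semigroup property together with the discrete invariance $\Phi(1)\mathcal{M}\subset\mathcal{M}$: for $t\in[0,1]$ and $s\ge 0$, write $t+s=n+r$ with $n\in\mathbb{N}$, $r\in[0,1)$, so $\Phi(s)\Phi(t)\mathcal{M}=\Phi(r)\Phi(n)\mathcal{M}\subset\Phi(r)\mathcal{M}\subset\widetilde{\mathcal{M}}$.

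The second step is to control the fractal dimension of $\widetilde{\mathcal{M}}$. The key ingredient here is the uniform Lipschitz (or at least Hölder) continuity of the map $(t,\phi)\mapsto\Phi(t)\phi$ on $[0,1]\times\mathcal{A}$; Lipschitz continuity in $\phi$ follows from Hypothesis~A1 and the variation-of-constants formula \eqref{2.2} together with Gr\"onwall's inequality, while Hölder (indeed Lipschitz) continuity in $t$ on the bounded invariant set $\mathcal{A}$ follows from the boundedness of $f$, of $g$, and of $\mathcal{A}$ itself, again via \eqref{2.2}. Since $\widetilde{\mathcal{M}}$ is the image of $[0,1]\times\mathcal{M}$ under a Lipschitz map and $\dim_f([0,1]\times\mathcal{M})\le 1+\dim_f\mathcal{M}$, one obtains
\begin{equation}\label{2.36}
\dim_f\widetilde{\mathcal{M}}\le 1+\dim_f\mathcal{M}\le 1+\frac{\ln\Lambda+\Lambda\ln\!\left(2+\frac{2}{\alpha}\right)}{-\ln\zeta}<\infty .
\end{equation}
Compactness of $\widetilde{\mathcal{M}}$ is inherited from compactness of $\mathcal{M}$ and of $[0,1]$ and continuity of $\Phi$. (The "$+1$" is harmless; alternatively the bound can be written exactly as in the statement by absorbing it into constants, but I would keep the explicit estimate \eqref{2.36}.)

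The third step is exponential attraction. For a bounded set $D\subset X$ and any $t$ large, write $t=n+r$ with $n\in\mathbb{N}$, $r\in[0,1)$. By Theorem~\ref{thm2.2}, $\operatorname{dist}_X(\Phi(n)D,\mathcal{M})\le c\,e^{-\omega n}$ once $n$ exceeds the absorption time $n_D$; applying $\Phi(r)$ and using the uniform Lipschitz constant $L_\Phi:=\sup_{r\in[0,1]}\operatorname{Lip}(\Phi(r)|_{\mathcal{A}})$ gives
\begin{equation}\label{2.37}
\operatorname{dist}_X(\Phi(t)D,\widetilde{\mathcal{M}})=\operatorname{dist}_X(\Phi(r)\Phi(n)D,\Phi(r)\mathcal{M})\le L_\Phi\,\operatorname{dist}_X(\Phi(n)D,\mathcal{M})\le L_\Phi c\,e^{-\omega n}\le c'\,e^{-\omega t},
\end{equation}
with $c'=L_\Phi c\,e^{\omega}$. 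Hence $e^{-\omega' t}\operatorname{dist}_X(\Phi(t)D,\widetilde{\mathcal{M}})\to 0$ for any $0<\omega'<\omega$, which is Definition~\ref{defn2.1}(iii). Renaming $\widetilde{\mathcal{M}}$ as $\mathcal{M}$ completes the proof.

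The main obstacle is the second step: one must verify genuine (uniform-in-$t$ on $[0,1]$) Lipschitz continuity of $\Phi(t)$ on $\mathcal{A}$, since this is what transfers finite fractal dimension from the discrete attractor to the continuous one. The $\phi$-dependence is routine from Hypothesis~A1 and \eqref{2.2}, but the $t$-regularity requires care: a priori $t\mapsto\Phi(t)\phi$ is only continuous into $\mathcal{C}$, and one needs the bound $\|\Phi(t_1)\phi-\Phi(t_2)\phi\|_{\mathcal{C}}\le C|t_1-t_2|^{\gamma}$ uniformly for $\phi\in\mathcal{A}$ for some $\gamma\in(0,1]$ — this is where boundedness of $f$ and $g$, the contraction estimate in Lemma~\ref{lem2.1}(i), and the invariance (hence uniform boundedness) of $\mathcal{A}$ are all used. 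Once this modulus of continuity is in hand, the dimension estimate \eqref{2.36} and the attraction estimate \eqref{2.37} are immediate.
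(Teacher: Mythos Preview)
Your proposal is correct and follows exactly the approach the paper invokes: the paper does not give a self-contained proof of Theorem~\ref{thm2.3} but simply refers to \cite[Theorem~3.2]{C9}, and what you have written is precisely the Carvalho--Sonner construction (take $\widetilde{\mathcal{M}}=\bigcup_{t\in[0,1]}\Phi(t)\mathcal{M}$, verify invariance, compactness, finite dimension via Lipschitz/H\"older continuity of $(t,\phi)\mapsto\Phi(t)\phi$ on $[0,1]\times\mathcal{A}$, and transfer the exponential attraction). Your observation about the extra ``$+1$'' in \eqref{2.36} is also well taken: the standard argument does produce $\dim_f\widetilde{\mathcal{M}}\le 1+\dim_f\mathcal{M}$, so the bound stated in \eqref{2.34} without the additive $1$ appears to be a minor oversight in the paper rather than a defect in your argument.
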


\noindent{\bf Acknowledgement.}
This work was jointly supported by the Scientific Research Fund of Hunan Provincial Education Department (23C0013), China Scholarship Council(202008430247). \\
The research of T. Caraballo has been partially supported by Spanish Ministerio de Ciencia e
Innovaci\'{o}n (MCI), Agencia Estatal de Investigaci\'{o}n (AEI), Fondo Europeo de
Desarrollo Regional (FEDER) under the project PID2021-122991NB-C21.\\
This work began when Wenjie Hu was visiting the Universidad de Sevilla as a visiting scholar, and he would like to thank the staff in the Facultad de Matem\'{a}ticas  for their hospitality and thank the university for its excellent facilities and support during his stay.

\small

\end{document}